\newtheorem{theorem}{Theorem}
\newtheorem{proposition}{Proposition}
\newtheorem{lemma}{Lemma}
\newtheorem{corollary}{Corollary}
\newcommand{\bbR}{{\mathord{\mathbb{R}}}}
\newcommand{\cV}{\mathord{\mathcal{V}}}
\newcommand{\cW}{\mathord{\mathcal{W}}}
\newcommand{\fra}{{\mathord{\mathfrak{a}}}}
\newcommand{\frC}{\mathord{\mathfrak{C}}}
\newcommand{\frh}{\mathord{\mathfrak{h}}}
\newcommand{\frg}{{\mathord{\mathfrak{g}}}}
\newcommand{\frk}{\mathord{\mathfrak{k}}}
\newcommand{\frl}{\mathord{\mathfrak{l}}}
\newcommand{\frt}{\mathord{\mathfrak{t}}}
\newcommand{\frv}{\mathord{\mathfrak{v}}}
\newcommand{\al}{{\mathord{\alpha}}}
\newcommand{\la}{{\mathord{\lambda}}}
\newcommand{\ins}[1]{\quad\mbox{{#1}}\quad}
\newcommand{\imply}{{\ }\Rightarrow{\ }}
\newcounter{sfsf}
\newcommand{\sform}{{\mathop{\mathrm{I}\kern-0.15pt\mathrm{I}}\nolimits}}
\newcommand{\clos}{\mathop{\mathrm{clos}}}
\newcommand{\GL}{\mathop{\mathrm{GL}}}
\newcommand{\LL}{\mathop{\mathrm{L}{}}}
\newcommand{\OO}{\mathop{\mathrm{O}{}}}
\newcommand{\Int}{\mathop{\mathrm{Int}}}
\newcommand{\codim}{\mathop{\mathrm{codim}}\nolimits}
\newcommand{\ad}{\mathop{\mathrm{ad}}}
\newcommand{\Ad}{\mathop{\mathrm{Ad}}}
\newcommand{\scal}[2]{\left<#1,#2\right>}
\newcommand{\spann}{\mathop{\mathrm{span}}}
\newcommand{\reg}{{\mathord{\mathrm{reg}}}}
\newcommand{\SP}{{\sf{SP}}}
\newcommand{\one}{{\mathord{\mathtt1}}}
\let\wh=\widehat
\def\Int{\mathop{\hbox{\rm Int}}\nolimits}
\def\Ad{\mathop{\mbox{\rm Ad}}\nolimits}
\def\cov{\{\wh{O\,}\!_v\}_{v\in\cV}}
\def\co#1{\wh{O\,}\!_{#1}}
\def\clos{\mathop{\hbox{\rm clos}}\nolimits}
\title{\bf  Polar representations of compact groups and
convex hulls of their orbits}
\author{V.\,Gichev}
\date{}
\begin{document}
\maketitle
\begin{abstract}
The paper contains a characterization of compact groups
$G\subseteq\GL(V)$, where $V$ is a finite dimensional real vector
space, which have the following property \SP{}: the family of
convex hulls of $G$-orbits is a semigroup with respect to the
Minkowski addition. If $G$ is finite, then \SP{} holds if and only
if $G$ is a Coxeter group; if $G$ is connected then \SP{} is true
if and only if $G$ is polar. In general, $G$ satisfies \SP{} if
and only if it is polar and its Weyl group is a Coxeter group.
\end{abstract}


\section{Introduction}
A representation of a compact Lie group $G$ in a finite
dimensional Euclidean  vector space $\frv$ is called {\it polar}
if there exists a linear subspace $\fra\subset\frv$ (which is said
to be a {\it Cartan subspace}) such that
\begin{itemize}
\item[(A)] each orbit $O_v=Gv$, where $v\in\frv$, meets
$\fra$;\label{aaa} \item[(B)] for any $u\in\fra$, the tangent
space $\frt_u=T_uO_u$ is orthogonal to $\fra$.\label{bbb}
\end{itemize}
It follows that the set $O_v\cap\fra$ is finite and
$\fra=\frt_u^\bot$ for generic $u\in\fra$. Polar representations
of compact Lie groups were defined and described in \cite{Da}. An
example is the adjoint representation $\Ad$ of $G$ in its Lie
algebra $\frg$, with any Cartan subalgebra as a Cartan subspace. A
more general example is the isotropy representation of a
Riemannian symmetric space (an {\it $s$-representation}). Let
$M=H/K$ be such a space, $\frh=\frk\oplus\frv$ be the Cartan
decomposition, where $\frh,\frk$ are Lie algebras of $H,K$,
respectively, and the space $\frv$ is $\Ad(K)$-invariant. The
representation $\Ad$ of $K$ in $\frv$ is polar since any maximal
abelian subspace $\fra$ of $\frv$ satisfies (A) and (B). By a
result of Dadok (\cite{Da}), a representation of $G$ is polar if
and only if it is {\it orbit equivalent} to some
$s$-representation: there exist $H,K$ as above and an embedding
$G\mapsto K$ such that $G$ is transitive on each $K$-orbit in
$\frv$. In the paper \cite{Da}, the proof involves  a case-by-case
check. A conceptual proof was given in the papers \cite{EH1},
\cite{EH2}. The paper \cite{DK} contains a classification of polar
representations of complex reductive Lie groups (there is a
natural extension of the definition onto this case). The family of
polar representations is not wide (see the papers
\cite{Da}--\cite{EH2} for detailed information). The survey
\cite{VP} contains a comparison of the property to be polar with
other ``good'' properties of representations (for example, to have
a free algebra of invariant polynomials).

In this article, the polar representations of compact Lie groups
are characterized by a semigroup property of their orbits. Let $G$
be a compact subgroup of $\GL(\frv)$; we say that $G$ is {polar}
if its identical representation is polar. For $A,B\subseteq\frv$,
$$A+B=\{a+b:\,a\in A,\,b\in B\}$$
is the Minkowski sum of $A$ and $B$. Let $\wh X$ denote the convex
hull of a set $X\subseteq\frv$.  Here is the semigroup property of
$G$ mentioned above:
\begin{itemize}
\item[\SP:] the family $\{\wh{O\,}\!_v\}_{v\in\frv}$ of convex
hulls of orbits is a semigroup with respect to the Minkowski
addition.
\end{itemize}
The group
\begin{eqnarray}\label{weyl}
W=\{g\in G:\,g\fra=\fra\}\big|_{\fra}
\end{eqnarray}
is said to be the {\it Weyl group} of $G$. Due to (B), it is
finite. In the statement of the following theorem, which is the
main result of the paper,  Coxeter groups are treated as  finite
linear groups generated by reflections in hyperplanes.
\begin{theorem}\label{main}
A compact linear group satisfies \SP{}  if and only if it is polar
and its Weyl group is a Coxeter group.
\end{theorem}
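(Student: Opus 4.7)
The plan is to prove both implications separately. For the sufficiency direction (polar plus Coxeter Weyl group implies \SP{}), the strategy is to reduce the Minkowski arithmetic of $G$-orbit hulls in $\frv$ to that of $W$-orbit hulls in $\fra$, which is governed by classical Coxeter group theory. For the necessity direction (\SP{} implies polar plus Coxeter $W$), the strategy is first to extract polarity from the hypothesis \SP{}, and then to reduce the remaining claim about $W$ to the finite case mentioned in the abstract.

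First I would carry out the sufficiency direction. Assume $G$ is polar with Cartan subspace $\fra$ and that the Weyl group $W$ defined by~\eqref{weyl} is a Coxeter group. The key ingredients are: (i) a Kostant-type convexity lemma stating that for any $v\in\frv$ and any $u\in O_v\cap\fra$, one has $\wh{O_v}\cap\fra=\wh{Wu}$, with the same set appearing as the orthogonal projection of $\wh{O_v}$ onto $\fra$; (ii) the classical semigroup property of orbit hulls of a Coxeter group, namely $\wh{Wu_1}+\wh{Wu_2}=\wh{W(u_1+u_2)}$ whenever $u_1,u_2$ lie in the closed fundamental chamber; and (iii) the intersection identity
\[
\bigl(\wh{O_{v_1}}+\wh{O_{v_2}}\bigr)\cap\fra=\bigl(\wh{O_{v_1}}\cap\fra\bigr)+\bigl(\wh{O_{v_2}}\cap\fra\bigr),
\]
which rests on property~(B) and a careful use of the projection onto $\fra$. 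Combining these, $\wh{O_{v_1}}+\wh{O_{v_2}}$ becomes a $G$-invariant convex body whose trace on $\fra$ is a $W$-orbit hull, and a short argument, again via Kostant convexity, identifies any such body with $\wh{O_{v_3}}$ for a suitable $v_3\in\fra$.

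Next I would handle the necessity direction. Suppose $G$ satisfies \SP{}. To show $G$ is polar, I would analyze the extreme points of Minkowski sums: since the extreme points of $\wh{O_v}+\wh{O_w}$ lie in $O_v+O_w=\{gv+hw:g,h\in G\}$ and must form a single $G$-orbit by~\SP{}, one obtains strong rigidity constraints that, iterated on sums $\wh{O_v}+\cdots+\wh{O_v}$, yield an abelian slice through a generic point. The finite-group and connected-group special cases noted in the abstract should supply the induction anchor; the general statement then follows by separating the connected component $G^0$ from the component group $G/G^0$, using the former to produce a Cartan subspace and the latter to enlarge the Weyl group. Once polarity is secured, restricting to $\fra$ and intersecting convex hulls of orbits with $\fra$ transports \SP{} for $G$ on $\frv$ into \SP{} for $W$ on $\fra$, and the finite case then forces $W$ to be Coxeter.

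The principal obstacle is the step deducing polarity of $G$ from \SP{}. The sufficiency side admits a transparent reduction to the Cartan subspace, but in the converse direction one starts only with a convex-geometric hypothesis and must manufacture the orthogonal flat section required by~(B). I would expect this to be the most delicate part of the argument, requiring careful interplay between the Minkowski semigroup on orbit hulls, the differential geometry of the orbits, and reduction arguments separating $G^0$ from $G/G^0$. The analysis of how the component group interacts with the Weyl structure of $G^0$, and in particular how it can destroy the Coxeter property without destroying polarity, is precisely what motivates the extra Coxeter condition on $W$ in the general statement.
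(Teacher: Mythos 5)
Your sufficiency direction is essentially the paper's own route: your Kostant-type convexity statement is Proposition~\ref{pol}(\romannumeral4) together with Lemma~\ref{propo}, your intersection identity is exactly the semigroup isomorphism of Proposition~\ref{reiso}, and the semigroup property of Coxeter orbit hulls in $\fra$ is the implication (\romannumeral2)$\Rightarrow$\SP{} of Theorem~\ref{finv}; likewise, the final step of your necessity direction (once polarity is known, intersect with $\fra$ to transport \SP{} to $W$ and invoke the finite case) is what the paper does. So those parts are fine in outline.

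The genuine gap is the step you yourself flag as delicate: deducing polarity from \SP{}. You give no mechanism beyond ``strong rigidity constraints'' from extreme points of Minkowski sums, and you propose to anchor the argument on the connected-group case ``noted in the abstract'' and then split $G$ into $G^e$ and $G/G^e$. That anchor is not available: the statement that \SP{} is equivalent to polarity for connected groups is a corollary of the theorem you are proving (the paper mentions that its only independent proof goes through isoparametric submanifolds and Morse theory), so using it is either circular or imports a major unproved input; moreover, the paper's proof never separates $G^e$ from the component group. The missing concrete argument is this: if $\wh O_u+\wh O_v=\wh O_{u+v}$ then $\frt_u+\frt_v\subseteq\frt_{u+v}$ (the paper's (\ref{sumor})); taking $u$ regular and $\fra=\frt_u^\bot$, a dimension count forces $\frt_v=\frt_u$ whenever $v$ is regular and this identity holds for the \emph{specific} pair $u,v$. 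But \SP{} only guarantees the identity for \emph{some} representatives in each pair of orbits, so the crux is to prove it for the fixed regular $u$ and \emph{all} $v$ in a neighborhood of $u$ inside $\fra$; the paper does this by a local peak-point analysis (Lemma~\ref{krist}, continuity of the peak point of $\la_u$ on $O_v$ near the nondegenerate maximum at $u$, a slice condition $O_v\cap U=\{v\}$, and the observation that the peak point of $\la_u$ on $\wh O_u+\wh O_v$ is $u+v$, hence an extreme point of $\wh O_w$ and therefore a point of $O_w$), which then yields $\frt_v\perp\fra$ for all $v\in\fra$, i.e.\ condition (B). Nothing in your proposal substitutes for this local argument, so the necessity half remains unproved as written.
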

In the definition of a polar representation, it is not assumed
that $G$ is connected. However, this property depends only on the
identity component $G^e$ of $G$; in particular, all finite linear
groups are polar by definition. By the theorem, a finite linear
group satisfies \SP{} if and only if it is a Coxeter group,
including non-crystallographic ones. This is proved in
Theorem~\ref{finv}, which also contains two other geometric
criteria for \SP{} (hence for a finite linear group to be
Coxeter).

Let $G$ be connected. If $G$ is polar, then generic orbits are
isoparametric submanifolds in the ambient Euclidean space (a
submanifold of the Euclidean space is called {\it
isoparametric}{\,} if its normal bundle is  flat and principal
curvatures are constant for any parallel normal vector field). For
isoparametric submanifolds of codimension greater than 2 the
converse is true (i.e., they can be realized as principal orbits
of polar groups, see \cite{Th}; in codimension 2, there are
nonhomogeneous examples). Any compact connected isoparametric
submanifold is naturally associated with a Coxeter group (see, for
example, \cite[Section~6.3]{PT}). If the submanifold is an orbit
of a polar group, then this Coxeter group coincides with the Weyl
group. Thus, if $G$ is connected, then \SP{} holds if and only if
$G$ is polar. The proof of these facts uses the Morse theory; it
would be interesting to know if there exists a direct elementary
proof of \SP{} for connected polar groups.

Let $H$ be a subgroup of $\GL(\frv)$ and $\frC(\frv,H)$ be the
family of all $H$-invariant convex sets in $\frv$. Clearly,
$\frC(\frv,H)$ with the Minkowski addition is a semigroup. The
following proposition is an essential step in the proof of the
theorem.
\begin{proposition}\label{reiso}
If\, $G$ is polar, then the mapping $A\to A\mathop\cap\fra$, where
$\fra$ is a Cartan subspace, is a semigroup isomorphism between
$\frC(\frv,G)$ and $\frC(\fra,W)$.
\end{proposition}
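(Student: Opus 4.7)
The plan is to verify four properties of $\Phi:A\mapsto A\cap\fra$: well-definedness, injectivity, additivity, and surjectivity. Well-definedness is immediate: $A\cap\fra$ is convex, and any $g\in G$ with $g\fra=\fra$ restricts to an element of $W$ preserving $A\cap\fra$ (since $g$ preserves both $A$ and $\fra$). Injectivity follows from property (A): for every $v\in A$ some $g\in G$ sends $v$ into $\fra$, hence into $A\cap\fra$, so $A=G(A\cap\fra)$ is completely determined by $\Phi(A)$.

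Both additivity and surjectivity reduce to a single lemma: for every $A\in\frC(\frv,G)$ the orthogonal projection $\pi:\frv\to\fra$ satisfies $\pi(A)\subseteq A\cap\fra$. Granting this, additivity is a two-line check: if $v\in(A_1+A_2)\cap\fra$, then $v=u_1+u_2$ for some $u_i\in A_i$, whence $v=\pi(v)=\pi(u_1)+\pi(u_2)\in(A_1\cap\fra)+(A_2\cap\fra)$, and the reverse inclusion is trivial. For surjectivity, given $B\in\frC(\fra,W)$ I set $A=\widehat{GB}$: clearly $B\subseteq A\cap\fra$, and conversely any $v\in A\cap\fra$ is a convex combination $v=\sum\la_i g_i b_i$ of points of $GB$, so $v=\pi(v)=\sum\la_i\pi(g_i b_i)$; by the lemma each $\pi(g_i b_i)\in\pi(O_{b_i})\subseteq\widehat{W b_i}\subseteq B$, whence $v\in B$.

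The main obstacle is the projection lemma itself, which is equivalent (via property (A) together with the $W$-invariance and convexity of $A\cap\fra$) to the Kostant-type inclusion $\pi(O_u)\subseteq\widehat{W u}$ for $u\in\fra$: indeed, given $v\in A$, pick $u\in O_v\cap\fra\subseteq A\cap\fra$; then $\pi(v)\in\pi(O_u)\subseteq\widehat{W u}\subseteq A\cap\fra$, the last inclusion holding because $W u\subseteq A\cap\fra$ and $A\cap\fra$ is convex. For polar representations this Kostant-type inclusion is classical; one can invoke Dadok's theorem (cited in the introduction) to reduce to an $s$-representation and then apply Kostant's convexity theorem, or argue intrinsically that a linear functional $\langle\,\cdot\,,\xi\rangle$ with $\xi\in\fra$ attains its maximum on $O_u$ at a critical point where $\xi$ is normal to the orbit, and that the polar structure forces such a critical point to lie in a Cartan subspace conjugate to $\fra$ by an element of $G$, realizing the maximum on $W u$; passing from this equality of support functions on $\fra$ to the full convexity inclusion uses that the compact polytope $\widehat{W u}$ is the intersection of its supporting half-spaces in $\fra$.
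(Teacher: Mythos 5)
Your overall scheme coincides with the paper's: the map $A\mapsto A\cap\fra$ agrees with the orthogonal projection $\pi$ on $\frC(\frv,G)$, additivity comes from linearity of $\pi$, injectivity from (A), and surjectivity from sending a $W$-invariant convex $B\subseteq\fra$ to a $G$-invariant convex set whose trace on $\fra$ is $B$ (the paper shows that $GB$ itself is already convex, Lemma~\ref{propo}(2); your use of the convex hull $\wh{GB}$ is a harmless variant). Everything therefore rests, exactly as in the paper, on the single inclusion $\pi O_u\subseteq\wh{Wu}$ for $u\in\fra$, which is Proposition~\ref{pol}(\romannumeral4).

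That is precisely the step the paper proves rather than quotes, and it is where your proposal has a genuine gap. In your intrinsic sketch, a peak point $x$ of $\la_\xi$ on $O_u$ with $\xi\in\fra$ satisfies only $\xi\in\fra_x$; the remark that $x$ lies in some Cartan subspace conjugate to $\fra$ is vacuous (by (A) every point of $\frv$ does) and does not place $x$ in $\fra$, much less in $Wu$. What makes the argument work is regularity: for $\xi\in\fra^\reg$ one has $\fra_\xi=\fra$ (Proposition~\ref{pol}), so by (\ref{peaka}) the peak set lies in $O_u\cap\fra$; one then needs the non-obvious identity $O_u\cap\fra=Wu$ (Proposition~\ref{pol}(\romannumeral3), resting on (\romannumeral1)--(\romannumeral2)) to conclude that the maximum is attained on $Wu$, and finally a density-of-$\fra^\reg$/continuity argument in $\xi$ before the support-function comparison you invoke. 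Your alternative route via Dadok plus Kostant leaves the same residual hole: Kostant's convexity theorem yields $\pi O_u=\wh{W_Ku}$ for the Weyl group of the ambient $s$-representation, and to replace $W_Ku$ by $Wu$ for the group $W$ defined in (\ref{weyl}) you again need $O_u\cap\fra=Wu$, i.e.\ that the whole intersection of the orbit with $\fra$ is a single $W$-orbit --- a fact you nowhere establish. So the reduction and the bookkeeping are correct and match the paper, but the proposal is incomplete at the one point that carries the mathematical content.
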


The semigroups of sets in topological groups were considered in
papers \cite{Ra1}, \cite{Ra2}, \cite{BPS}, \cite{BG}. In $\bbR^n$,
one parameter semigroups are of the form $\{tQ\}_{t\geq0}$, where
$Q$ is a convex set.  In particular, the family of closed balls
for any norm is a semigroup. For a left invariant Riemannian
metric in a Lie group, the family of closed balls centered at the
identity is also a semigroup of sets. This property holds for all
left invariant inner metrics; moreover, it characterizes them (see
\cite{BPS}). Semigroups may be parameterized by more general
objects than the numbers. In terms of \cite{BPS}, this defines a
geometry on a group. The orbits of an $s$-representation are
parameterized by points of a closed convex simplicial cone $C$
(the Weyl chamber of the restricted root system). Thus, the
semigroup of their convex hulls can be treated as a vector valued
norm (in general, non-symmetric) on $\frv$ with values in $C$.

The exposition is self-contained and elementary. Some of
preliminary results were published in \cite{Gi1}. We refer to
\cite{Br} and \cite{VP} for general facts on actions of groups.

Throughout the paper, we keep the notation above. Furthermore, let
$f$ be a real function on a set $X$. The set of all $x\in X$ such
that $f(x)=\max_{y\in X}f(y)$ is called the {\it peak set for $f$
on $X$}. A {\it peak point} is a point which is a peak set; in
this case, we say that $f$ {\it has a peak on $X$}.
$\Int_{\frl}(X)$ is the interior of a set $X\subseteq\frl$ in
$\frl\subseteq\frv$. The space $\frv$ is equipped with the inner
product $\scal{\ }{\ }$ such that $G\subseteq\OO(\frv)$. For
$v,u\in\frv$ and compact $X\subseteq\frv$, set
\begin{eqnarray*}
&\fra_v=\frt_v^\bot\subseteq\frv,\\
&\la_v(u)=\scal{v}{u},\\
&\mu_v(X)=H_X(v)=\max_{x\in X}\la_v(x),\\
&P_v(X)=\{x\in X:\,\la_v(x)=\mu_v(X)\}.
\end{eqnarray*}
$H_X$ is the support function for $X$. Note that $u$ is a critical
point for $\la_v$ on $O_u$ if and only if $v\in\fra_u$,
equivalently, if and only if $u\in\fra_v$ ($\frg u\perp{v}$ is the
same as ${u}\perp{\frg v}$). Since $P_v(O_u)$ is the peak set for
$\la_v$ on $O_u$, we get
\begin{eqnarray}\label{peaka}
P_v(O_u)\subseteq\fra_v.
\end{eqnarray}
For all compact $X,Y\subseteq\frv$ and $v\in\frv$, we obviously
have
\begin{eqnarray}
\mu_v(X+Y)=\mu_v(X)+\mu_v(Y),\label{addmu}\\
P_v(X+Y)=P_v(X)+P_v(Y).\label{sumpe}
\end{eqnarray}

The stable subgroup of $v\in\frv$ and its Lie algebra are denoted
by $G_v$, $\frg_v$, respectively. A point $v\in\frv$ is said to be
{\it regular} if $G_v$ is minimal: $G_u\subseteq G_v$ implies
$G_u=G_v$, where $u\in\frv$. For $X\subseteq\frv$, $X^\reg$ is the
set of all regular points in $X$; $\spann(X)$ is the linear span
of $X$. The algebra of linear operators $\frv\to\frv$ is denoted
by $\LL(\frv)$, $e$ is the unit of $G$, $\pi\in\LL(\frv)$ is the
orthogonal projection onto the Cartan subspace $\fra$, and
$\bbR^+=[0,\infty)$.

\section{Preparatory material}
In this section, $G$ is not assumed polar unless this is stated
explicitly. Obviously, the sum of convex sets is convex and the
sum of $G$-invariant sets is $G$-invariant. Hence the inclusion
\begin{equation*}
\co u+\co v\supseteq \co{u+v}
\end{equation*}
holds for all $u,v\in\frv$. The family $\cov$ is a semigroup if
and only if the equality holds for some $u,v$ in every pair of
orbits.

The tangent space $\frt_v=T_vO_v$ may be identified with the
quotient $\frg/\frg_v$ or with the complementary subspace:
\begin{equation*}
\frt_v=\frg v\cong\frg/\frg_v =\frg_v^\bot\subseteq\frg,
\end{equation*}
where $\perp$ relates to some invariant inner product in $\frg$;
then $\frg_v^\bot$ is $\ad(\frg_v)$-invariant.
\begin{lemma}\label{frist}
For any $u\in\frv$, there exists a neighborhood $U$ of $u$ in
$\fra_u$ such that $u$ is a peak point on $O_u$ for all $\la_v$
such that $v\in U$.
\end{lemma}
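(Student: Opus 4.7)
My plan starts from the observation that, because $G$ acts by orthogonal transformations, every $x\in O_u$ satisfies $|x|=|u|$, so $|x-u|^2=2(|u|^2-\la_u(x))$ and hence
$$\la_u(x)=|u|^2-\tfrac12|x-u|^2.$$
In particular, $u$ is the unique maximizer of $\la_u$ on $O_u$. Since $O_u$ is compact and $(v,x)\mapsto\la_v(x)$ is continuous, a standard argmax-continuity argument (if $v_n\to u$ and $x_n\in P_{v_n}(O_u)$, any limit point of $(x_n)$ lies in $P_u(O_u)=\{u\}$) shows that for every neighborhood $N$ of $u$ in $O_u$ there exists $\ep>0$ with the property that $P_v(O_u)\subseteq N$ whenever $v\in\frv$ and $|v-u|<\ep$.

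For the local picture I would write $v=u+w$ with $w\in\fra_u$ (legitimate, since $u\in\fra_u$ because $\langle u,Xu\rangle=0$ for skew $X$), decompose $x-u=y+z$ with $y\in\frt_u$ and $z\in\fra_u$, and use $w\perp y$ together with the identity above to get
$$\la_v(x)-\la_v(u)=-\tfrac12|x-u|^2+\scal{w}{z}.$$
Because $\frt_u$ is the tangent space to the smooth submanifold $O_u$ at $u$, there exist $C,r>0$ such that $|z|\leq C|x-u|^2$ for every $x\in O_u$ with $|x-u|<r$; concretely, this follows from parametrizing $x=\exp(X)u$ with $X\in\frg_u^\bot$ and Taylor-expanding to get $x-u=Xu+O(|X|^2)$, with $Xu\in\frt_u$ and $|Xu|\geq c|X|$ for some $c>0$ (the map $X\mapsto Xu$ being injective on $\frg_u^\bot$). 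Plugged into the displayed identity, this gives
$$\la_v(x)-\la_v(u)\leq |x-u|^2\bigl(-\tfrac12+C|w|\bigr),$$
which is strictly negative whenever $x\neq u$ and $|w|<1/(2C)$.

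To finish, I pick $N=\{x\in O_u:|x-u|<r\}$, let $\ep>0$ be the constant supplied by the first paragraph for this $N$, and set $U=\{v\in\fra_u:|v-u|<\min(\ep,1/(2C))\}$. For $v\in U$ the peak set $P_v(O_u)$ lies inside $N$, and inside $N$ the strict inequality above forces this peak set to be $\{u\}$, so $u$ is a peak point for $\la_v$ on $O_u$. The only delicate ingredient is the quadratic bound on the normal component $|z|$ of $x-u$; but this is just the tangency of $\frt_u$ to $O_u$ at $u$ together with smoothness of the orbit, so no real obstacle arises.
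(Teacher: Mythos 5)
Your proof is correct and follows essentially the same route as the paper: the point $u$ stays critical for $\la_v$ when $v\in\fra_u$, the second-order negativity (which you encode via the identity $\la_u(x)=|u|^2-\tfrac12|x-u|^2$ and the quadratic bound on the normal component, rather than by citing negative definiteness of $d^2\la_v$ on $\frt_u$) survives small perturbations of $v$, and a compactness/argmax-continuity argument upgrades the strict local maximum to a global peak. You merely spell out quantitatively the local-to-global step that the paper leaves implicit.
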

\begin{proof}
Clearly, $u$ is a peak point for $\la_u$ on $O_u$ and $d^2\la_u$
is negative definite on $\frt_u$. If $v-u$ is small, then the
latter is also true for $d^2\la_v$; if $v\in\fra_u$, then $u$ is a
critical point for $\la_v$. Hence $\la_v$ has a strict local
maximum on $O_u$, which is global if $v\in\fra_u$ is sufficiently
close to $u$.
\end{proof}
Let $C_v^*$ be the closed convex cone hull of the shifted orbit
$O_{-v}$ and $C_v$ be the dual cone to it:
\begin{eqnarray}
C_v&=&\{u\in\frv:\,\scal{u}{v-gv}\geq0\ins{for all}g\in
G\},\label{cvdef}\\
C_v^*&=&\clos\left(\bbR^+(v-\wh O_v)\right).\nonumber
\end{eqnarray}
Clearly, $v\in C_v$. It follows from (\ref{cvdef}) that
\begin{eqnarray}\label{conat}
u\in C_v\quad\Longleftrightarrow\quad v\in P_u(O_v).
\end{eqnarray}
\begin{corollary}\label{cone}
$C_v$ is a closed convex generating cone in $\fra_v$.
\end{corollary}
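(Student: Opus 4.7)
My plan is to verify the three properties separately: closed convex cone, contained in $\fra_v$, and having nonempty interior in $\fra_v$ (which is equivalent to being a generating cone).

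The first part is essentially by inspection of the defining formula~(\ref{cvdef}). The set $C_v$ is the intersection over $g\in G$ of the closed half-spaces $\{u:\scal{u}{v-gv}\geq0\}$, each of which passes through the origin. So $C_v$ is automatically closed, convex, and invariant under multiplication by nonnegative scalars, i.e.\ a closed convex cone in $\frv$.

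For the inclusion $C_v\subseteq\fra_v$, I would read off the equivalence~(\ref{conat}): given $u\in C_v$, we have $v\in P_u(O_v)$. In particular $v$ is a critical point of $\la_u$ on $O_v$, which by the remark preceding~(\ref{peaka}) is the same as $u\in\fra_v$. (Equivalently, one can apply~(\ref{peaka}) directly: $v\in P_u(O_v)\subseteq\fra_u$ says $v\perp\frg u=\frt_u$, and by symmetry of the inner product $u\perp\frt_v$, so $u\in\fra_v$.)

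For the generating property I would apply Lemma~\ref{frist} with the roles of the two vectors matching the present notation, i.e.\ with its $u$ replaced by $v$: there is a neighborhood $U$ of $v$ in $\fra_v$ such that, for every $w\in U$, the point $v$ is a peak point of $\la_w$ on $O_v$. Then $v\in P_w(O_v)$, and~(\ref{conat}) (with $u$ in that formula replaced by $w$) gives $w\in C_v$. Hence $U\subseteq C_v$, so $C_v$ has nonempty interior in $\fra_v$, which forces $\spann(C_v)=\fra_v$; that is, $C_v$ is a generating cone in $\fra_v$. No step here looks hard; the only thing to be a little careful about is consistently renaming the variables of Lemma~\ref{frist} and~(\ref{conat}) so that the ``peak point'' conclusion of the lemma feeds directly into the equivalence that defines membership in $C_v$.
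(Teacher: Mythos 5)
Your proof is correct and follows essentially the same route as the paper: closedness and convexity read off from the definition, the inclusion $C_v\subseteq\fra_v$ via (\ref{conat}) together with (\ref{peaka}) (or equivalently the skew-symmetry remark $\frg u\perp v\Leftrightarrow u\perp\frg v$), and nonempty interior in $\fra_v$ from Lemma~\ref{frist}. The only cosmetic difference is that the paper derives the inclusion by first recording the symmetric relations (\ref{muvvu})--(\ref{peequ}), which it needs later anyway, whereas you invoke the critical-point symmetry directly; the substance is the same.
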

\begin{proof}
Clearly, $C_v$ is convex and closed.  For all $u,v\in\frv$ we have
\begin{eqnarray}
\mu_u(O_v)=\mu_v(O_u),\label{muvvu}\\
u\in C_v\quad\Longleftrightarrow\quad v\in C_u,\label{equi}\\
u\in P_v(O_u)\quad\Longleftrightarrow\quad v\in
P_u(O_v).\label{peequ}
\end{eqnarray}
Indeed, (\ref{muvvu}) is true since $\max_{g\in G}\scal{u}{gv}=
\max_{g\in G}\scal{v}{g^{-1}u}$, (\ref{equi}) and (\ref{peequ})
follow from (\ref{conat}), (\ref{muvvu}), and the evident equality
$\la_u(v)=\la_v(u)$. By (\ref{conat}), (\ref{peequ}), and
(\ref{peaka}), $C_v\subseteq\fra_v$, moreover,
$\Int_{\fra_v}(C_v)\neq\emptyset$ due to Lemma~\ref{frist}.
\end{proof}
According to Corollary~\ref{cone}, $\frt_v=C_v^\bot$. Hence,
\begin{eqnarray}
C^*_v=(C^*_v\cap\fra_v)+\frt_v\label{pr=pe}.
\end{eqnarray}
Note that $C_v$ is pointed  (and $C_v^*$ is generating) if and
only if $\Int(\wh O_v)\neq\emptyset$. By (\ref{conat}) and
(\ref{peequ}),
\begin{eqnarray}\label{pvou}
P_v(O_u)=C_v\cap O_u.
\end{eqnarray}
\begin{corollary}\label{pere}
Each $G$-orbit in $\frv$ meets $C_v$.
\end{corollary}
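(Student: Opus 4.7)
The plan is to deduce this directly from the identity (\ref{pvou}), which identifies the intersection $C_v\cap O_u$ with the peak set $P_v(O_u)$ of $\la_v$ on $O_u$. Fix an arbitrary $u\in\frv$. Since $G$ is compact and the orbit map $g\mapsto gu$ is continuous, the orbit $O_u$ is compact in $\frv$. The continuous linear functional $\la_v$ therefore attains its maximum on $O_u$ by the extreme value theorem, so the peak set $P_v(O_u)$ is nonempty. Applying (\ref{pvou}) yields
\begin{equation*}
C_v\cap O_u=P_v(O_u)\neq\emptyset,
\end{equation*}
which is exactly the assertion that $O_u$ meets $C_v$.

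If one prefers an argument that does not appeal to (\ref{pvou}) as a black box, the same reasoning can be unwound using only (\ref{conat}) and (\ref{peequ}). Pick $g_0\in G$ with $\la_v(g_0u)=\mu_v(O_u)$, so that $g_0u\in P_v(O_u)=P_v(O_{g_0u})$. By (\ref{peequ}) this is equivalent to $v\in P_{g_0u}(O_v)$, which in turn, by (\ref{conat}) applied with $u$ replaced by $g_0u$, is equivalent to $g_0u\in C_v$. Hence $O_u$ meets $C_v$ at the distinguished point $g_0u$.

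I do not anticipate any real obstacle here: the substantive work has already been done in setting up the symmetry relations (\ref{muvvu}), (\ref{equi}), (\ref{peequ}), and deriving (\ref{pvou}) from them. What remains is merely the observation that peak sets of continuous functions on compact sets are nonempty, so the corollary is essentially a translation of Weierstrass's theorem through the established dictionary between $C_v$ and peak sets.
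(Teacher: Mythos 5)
Your proof is correct and matches the paper's argument: the paper's own proof is exactly the observation that compactness of $G$ makes $\la_v$ attain its maximum on $O_u$, so the peak set is nonempty and lies in $C_v$ by (\ref{pvou}) (equivalently by (\ref{conat}) and (\ref{peequ})). The extra unwinding you give is a faithful expansion of the same reasoning, not a different route.
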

\begin{proof} Since $G$ is compact, we have $\la_v(x)=\mu_v(O_u)$
for  some $x\in O_u$.
\end{proof}
Note that each orbit of any connected component of $G$ also meets
$\fra_v$ since there are critical points of $\la_v$ on it;
however, the intersection need not have a common point with $C_v$.

It follows from (\ref{pvou}) that
\begin{eqnarray}\label{ovcv}
O_v\cap C_v=\{v\}.
\end{eqnarray}
\begin{lemma}\label{peak}
Let $u\in C_v$. If $\la_v$ has a peak on $O_u$ at $u$, then
$G_v\subseteq G_u$.
\end{lemma}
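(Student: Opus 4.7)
The plan is to use the orthogonality of $G$ together with the uniqueness of the peak to force every element of $G_v$ to fix $u$.

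First I would unwind the hypothesis. By the paper's convention, the phrase ``$\la_v$ has a peak on $O_u$ at $u$'' means that $u$ is the \emph{unique} point of $O_u$ where $\la_v$ attains its maximum, i.e.\ $P_v(O_u)=\{u\}$. (In particular $u\in P_v(O_u)\subseteq C_v\cap O_u$ by (\ref{pvou}), so the hypothesis $u\in C_v$ is automatic; it is just stated to fix the setting.)

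Next, for an arbitrary $g\in G_v$ I would show $gu$ also lies in the peak set. Since $G\subseteq\OO(\frv)$ and $gv=v$, we have $g^{-1}v=v$, and therefore
\begin{equation*}
\la_v(gu)=\scal{v}{gu}=\scal{g^{-1}v}{u}=\scal{v}{u}=\la_v(u)=\mu_v(O_u).
\end{equation*}
Hence $gu\in P_v(O_u)$. Combining this with the uniqueness of the peak gives $gu=u$, which is exactly $g\in G_u$.

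There is really no obstacle here; the entire content is the interplay between the orthogonality of the $G$-action (which makes $\la_v$ invariant along any $G_v$-orbit) and the strictness of the maximum hypothesis. I would just need to be careful to cite the definition of ``peak'' from the introduction so that the step $P_v(O_u)=\{u\}$ is justified, and to note explicitly that $g\in\OO(\frv)$ validates the adjunction $\scal{v}{gu}=\scal{g^{-1}v}{u}$.
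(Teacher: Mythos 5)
Your proof is correct. The skeleton is the same as the paper's --- show that any $g\in G_v$ sends $u$ to another point of the peak set $P_v(O_u)$ and then invoke the uniqueness built into the word ``peak'' --- but you reach the membership $gu\in P_v(O_u)$ differently. The paper routes it through the cone machinery: $u\in C_v$ gives $v\in C_u$ by (\ref{equi}), hence $v=gv\in gC_u=C_{gu}$, so $gu\in C_v$, and then $gu\in P_v(O_u)$ by (\ref{pvou}). You instead compute directly, using $g\in\OO(\frv)$ and $gv=v$, that $\la_v(gu)=\scal{g^{-1}v}{u}=\la_v(u)=\mu_v(O_u)$; this is more elementary, needs neither (\ref{equi}) nor (\ref{pvou}), and has the small bonus of making explicit that the hypothesis $u\in C_v$ is redundant once $u$ is assumed to be a peak point (it follows from (\ref{pvou}), as you note). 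Your reading of ``has a peak at $u$'' as $P_v(O_u)=\{u\}$ matches the paper's definition of peak point, so the uniqueness step is justified.
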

\begin{proof}
If $g\in G_v$, then $gu\in C_v$ by (\ref{equi}) since $v=gv\in
gC_u=C_{gu}$. By (\ref{pvou}), $gu\in P_v(O_u)$. Since $u$ is a
peak point for $\la_v$, we get $gu=u$.
\end{proof}
\begin{lemma}\label{sepa}
Let $u\in O_v$ and $u\neq v$. Set $w=u-v$. Then $\left(C_u\cap
C_v\right)\perp w$. Furthermore, $w\not\perp \fra_v$ and the set
$C_u\cap C_v$ is contained in the hyperplane $\fra_v\cap w^\bot$
in $\fra_v$, which is proper. The hyperplane  $w^\bot$ separates
$C_u$ and $C_v$, moreover, $u$ and $v$ are strictly separated.
\end{lemma}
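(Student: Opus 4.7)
The plan is to extract everything from the defining inequality \eqref{cvdef} by exploiting the fact that, since $u\in O_v$, there exist group elements sending $v$ to $u$ and $u$ to $v$. Substituting these specific elements into the definitions of $C_v$ and $C_u$ will immediately produce the sign conditions on $\scal{x}{w}$ that govern every separation claim in the statement.

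First I would establish the sign law. If $x\in C_v$, choosing $g\in G$ with $gv=u$ in \eqref{cvdef} gives $\scal{x}{v-u}\geq 0$, i.e.\ $\scal{x}{w}\leq 0$; symmetrically, $x\in C_u$ forces $\scal{x}{w}\geq 0$. Hence the hyperplane $w^\bot$ separates $C_v$ from $C_u$, and any $x\in C_u\cap C_v$ satisfies $\scal{x}{w}=0$, which is exactly $(C_u\cap C_v)\perp w$.

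Next I would verify $w\not\perp\fra_v$. Since $G\subseteq\OO(\frv)$, its Lie algebra acts by skew-symmetric operators, so $\scal{v}{Xv}=0$ for every $X\in\frg$, whence $v\perp\frt_v$ and therefore $v\in\fra_v$. Now $\scal{v}{w}=\scal{v}{u}-\|v\|^2$, and because $\|u\|=\|v\|$ with $u\neq v$, Cauchy-Schwarz is strict and yields $\scal{v}{w}<0$. Thus $w$ is not orthogonal to $v\in\fra_v$, and combining this with $C_v\subseteq\fra_v$ (Corollary \ref{cone}) places $C_u\cap C_v$ inside the proper hyperplane $\fra_v\cap w^\bot$ of $\fra_v$.

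Finally, the identical computation on $u$ gives $\scal{u}{w}=\|u\|^2-\scal{u}{v}>0$, so $u$ and $v$ sit in the open half-spaces $\{\scal{\cdot}{w}>0\}$ and $\{\scal{\cdot}{w}<0\}$ respectively, which is strict separation by $w^\bot$. No step here is a real obstacle; the lemma is essentially a book-keeping argument once one notices that the separating functional $w=u-v$ is itself a difference of two orbit points, so applying \eqref{cvdef} to the right group elements supplies all the needed inequalities, and the orthogonality of the action then upgrades them to strict inequalities via $\|u\|=\|v\|$.
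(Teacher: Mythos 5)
Your proof is correct and follows essentially the same route as the paper: the paper derives the same sign inequalities $\scal{x}{w}\geq 0$ on $C_u$ and $\scal{y}{w}\leq 0$ on $C_v$ from (\ref{conat}) (equivalent to your direct substitution of the group elements sending $v\mapsto u$ and $u\mapsto v$ into (\ref{cvdef})), and it rules out $w\perp\fra_v$ by the same use of $v\in\fra_v$ and $|u|=|v|$, only phrased as the Pythagorean contradiction $|u|^2=|v|^2+|w|^2$ instead of your strict Cauchy--Schwarz estimate $\scal{v}{w}<0$, $\scal{u}{w}>0$.
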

\begin{proof} If $x\in C_u\cap C_v$, then
$\la_x(u)=\la_x(v)=\mu_x(O_v)$ by (\ref{conat}). Hence
$\scal{x}{w}=0$. Further, $w\perp\fra_v$ implies
$|u|^2=|v|^2+|w|^2>|v|^2$ contradictory to $u\in O_v$. For
arbitrary $x\in C_u$ and $y\in C_v$,
\begin{eqnarray*}
\scal{w}{x}=\la_x(u)-\la_x(v)\geq0\geq\la_y(u)-\la_y(v)=\scal{w}{y}.
\end{eqnarray*}
The inequalities hold due to (\ref{conat}). If $x=u$ or $y=v$,
then at least one of the inequalities is strict; if $x=y\in
C_u\cap C_v$, then we have equalities.
\end{proof}
Note that $w^\bot$ is the equidistant hyperplane for $u$ and $v$.
\begin{corollary}\label{empin}
The following assertions hold:
\begin{itemize}
\item[\rm(1)] if $u\in O_v$ and $u\neq v$, then
$\Int_{\fra_v}(C_v)\cap C_u=\emptyset$; \item[\rm(2)] if
$x\in\Int_{\fra_v}(C_v)$, then $C_v\cap O_x=G_vx$.
\end{itemize}
\end{corollary}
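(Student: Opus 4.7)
The plan is to derive (1) directly from Lemma~\ref{sepa}, then to bootstrap it into (2) via the equivariance relation (\ref{equi}) together with the $G$-equivariance $C_{hv}=hC_v$, which is immediate from the definition (\ref{cvdef}) after substituting $g'=h^{-1}gh$.

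For (1), I would fix $u\in O_v$ with $u\neq v$ and set $w=u-v$. Lemma~\ref{sepa} tells me that $C_v\subseteq\{z:\scal{z}{w}\leq0\}$ and that $w\not\perp\fra_v$. Supposing for contradiction that some $y$ lies in $\Int_{\fra_v}(C_v)\cap C_u$, the same lemma forces $\scal{y}{w}=0$ since $y\in C_u\cap C_v$. But $y$ has an open neighborhood $U\subseteq\fra_v$ contained in $C_v$, and the linear functional $z\mapsto\scal{z}{w}$ restricted to $\fra_v$ is nonzero (precisely because $w\not\perp\fra_v$), so $U$ contains points where this functional is strictly positive, contradicting $U\subseteq C_v\subseteq\{\scal{\cdot}{w}\leq0\}$.

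For (2), one inclusion is formal: since $x\in C_v$ and $C_v$ is $G_v$-invariant (directly from (\ref{cvdef})), $G_vx\subseteq C_v\cap O_x$. The substantive observation is that (1) together with (\ref{equi}) yields $O_v\cap C_x=\{v\}$; indeed, any $u\in O_v\cap C_x$ with $u\neq v$ would give $x\in C_u$ by (\ref{equi}) and also $x\in\Int_{\fra_v}(C_v)$ by hypothesis, contradicting (1). Given $y\in C_v\cap O_x$, write $y=hx$ with $h\in G$. Then $y\in C_v$ translates via (\ref{equi}) to $v\in C_y=hC_x$, i.e., $h^{-1}v\in C_x$; combined with $h^{-1}v\in O_v$, this forces $h^{-1}v\in O_v\cap C_x=\{v\}$, so $h\in G_v$ and $y\in G_vx$.

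The delicate point is really (1): the argument is short but must combine three ingredients in just the right way--the half-space separation from Lemma~\ref{sepa}, the openness of $\Int_{\fra_v}(C_v)$ in $\fra_v$, and the nontriviality $w\not\perp\fra_v$ (without which the separating hyperplane would swallow $\fra_v$ and the argument would collapse). Once (1) is in hand, (2) is bookkeeping: the $G$-equivariance of the cone construction reduces the problem to an instance of (1).
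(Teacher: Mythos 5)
Your proof is correct and follows essentially the same route as the paper: part (1) is exactly the strict-separation consequence of Lemma~\ref{sepa} (you merely spell out why $w\not\perp\fra_v$ and the openness of $\Int_{\fra_v}(C_v)$ force strict negativity of $\scal{\cdot}{w}$ there), and part (2) is the paper's argument repackaged, using (\ref{equi}) and the equivariance $C_{hx}=hC_x$ to reduce $y\in C_v\cap O_x$ to an application of (1) with $u=h^{-1}v$, plus the trivial inclusion $G_vx\subseteq C_v\cap O_x$ from $G_v$-invariance of $C_v$.
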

\begin{proof}  The hyperplane $w^\bot$ strictly separates
$\Int_{\fra_v}(C_v)$ and $C_u$ due to Lemma~\ref{sepa}. Thus (1)
is true. If $g\in G$, $x\in\Int_{\fra_v}C_v$, and $gx\in C_v$,
then
$$x\in\Int_{\fra_v}\left(C_v\right)\cap C_{g^{-1}v}.$$
By (1), $gv=v$. Conversely, if $gv=v$, then $gC_v=C_v$, hence
$gx\in C_v$. This proves (2).
\end{proof}

We omit the proof of the following lemma, which is standard.
\begin{lemma}\label{krist}
For any $u\in\frv^\reg$ and its neighborhood $V$ in $\frv$, there
exists a neighborhood $U$ of $u$ in $\frv$ with the following
property: if $v\in U$, then  $V\cap O_v$ contains a unique
critical point of $\la_u$ on $O_v$, which is also a peak point for
$\la_u$ on $O_v$. \qed
\end{lemma}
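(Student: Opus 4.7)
The plan is to reduce the lemma to a non-degenerate Hessian computation at $u$ plus the implicit function theorem, together with a short compactness argument for the peak property. First I record facts valid for arbitrary $u\in\frv$: since $G\subseteq\OO(\frv)$, every $X\in\frg$ acts as a skew operator, whence $\scal{u}{Xu}=0$. Thus $u\in\fra_u$ is automatically a critical point of $\la_u$ on $O_u$; in a tangent direction $Xu$ with $X\in\frg_u^\perp$, the Hessian equals $\scal{u}{X^2u}=-|Xu|^2$, which is negative definite because the map $X\mapsto Xu$ has kernel $\frg_u$. Globally, $\la_u(gu)\leq|u|^2=\la_u(u)$ with equality iff $gu=u$, so $u$ is the unique peak of $\la_u$ on $O_u$.

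Next I apply the implicit function theorem to the smooth map $\Psi\colon\frg_u^\perp\times\frv\to(\frg_u^\perp)^*$ given by $\Psi(X,v)(Y)=\scal{u}{Y\exp(X)v}$. For $Y\in\frg_u$ the pairing $\scal{u}{Yw}=-\scal{Yu}{w}$ vanishes automatically by skew-symmetry, so $\Psi(X,v)=0$ is equivalent to $\exp(X)v\in\fra_u$, i.e.\ to $\exp(X)v$ being a critical point of $\la_u$ on $O_v$. Since $D_X\Psi(0,u)$ is the non-degenerate Hessian from the previous paragraph, IFT produces a smooth map $v\mapsto X(v)$ defined near $u$, with $X(u)=0$, so that $x(v):=\exp(X(v))v$ is a critical point of $\la_u$ on $O_v$ lying in $V$ after shrinking; by continuity of the Hessian, $x(v)$ remains a strict local maximum.

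For uniqueness in $V\cap O_v$ and the peak property, I run a compactness argument: if $v_n\to u$ and $w_n=g_nv_n\in P_u(O_{v_n})$, then after passing to a subsequence with $g_n\to g$ one has $w_n\to gu$ and $\la_u(w_n)=\mu_u(O_{v_n})\to|u|^2$, forcing $gu=u$ and $w_n\to u$. Hence for $v$ close enough to $u$, every peak point of $\la_u$ on $O_v$ lies in $V$, and as a peak it is automatically critical. To identify this peak with $x(v)$ and exclude other critical points in $V\cap O_v$, I would write an arbitrary such critical point $w=gv$ with $g$ near $e$ as $\exp(X_1)\exp(X_2)v$ with $X_1\in\frg_u^\perp$ and $X_2\in\frg_u$ small, and invoke the regularity of $u$ (the principal isotropy property: $G_u$ fixes a neighborhood of $u$ in the slice $\fra_u$ pointwise) to absorb $\exp(X_2)$ into the orbit parametrization. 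The IFT uniqueness clause then forces $X_1=X(v)$, giving $w=x(v)$.

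I expect the last step to be the main obstacle: this is where the regularity hypothesis on $u$ is essential, because without it the $G_u$-ambiguity in the decomposition $\frg=\frg_u\oplus\frg_u^\perp$ could produce additional critical points on $O_v$ near $u$. I would handle this by a brief appeal to the slice/principal isotropy theorem, whose use justifies the author's remark that the proof is standard.
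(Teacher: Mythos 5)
The paper gives no argument for this lemma (it is stated with proof omitted as ``standard''), so there is nothing to compare against; judged on its own, your outline follows the standard route (nondegenerate Hessian at $u$, implicit function theorem along the orbit directions $\frg_u^\perp$, a compactness argument forcing peak points of $\la_u$ on $O_v$ to converge to $u$, and a principal-isotropy argument for uniqueness), and the first three steps are correct as written. Note only that, as stated, the uniqueness of the critical point in $V\cap O_v$ can hold only after $V$ is shrunk (for large $V$ the orbit $O_v$ has further critical points of $\la_u$, e.g.\ near $-u$ in the circle example); your implicit shrinking is consistent with how the lemma is used later in the paper.

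The one genuine soft spot is the final uniqueness step, and your reduction there is not justified as written. If $w\in V\cap O_v$ is a critical point of $\la_u$ on $O_v$, you cannot assert that $w=gv$ with $g$ near $e$: the compactness argument only shows that any such $g$ is close to the isotropy group $G_u$, which for regular $u$ may be positive-dimensional or disconnected and contain elements far from the identity (e.g.\ the maximal torus for an adjoint representation). So a priori $w=g'hv$ with $g'$ near $e$ and $h\in G_u$ arbitrary, and ``absorbing'' the $G_u$-factor is exactly the point where regularity must be used. The gap closes as follows. Criticality of $w$ means $w\in\fra_u$, and by Lemma~\ref{nepo} regularity of $u$ gives $G_u\subseteq G_w$; hence $h^{-1}g'h$ is again a representative ($h^{-1}g'hv=h^{-1}w=w$) and it \emph{is} near $e$, so $w=\exp(X_1)\exp(X_2)v$ with $X_1\in\frg_u^\perp$, $X_2\in\frg_u$ small. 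Your IFT uniqueness clause then gives $w=x(\exp(X_2)v)$, and to conclude $w=x(v)$ you still need $x(kv)=x(v)$ for $k\in G_u$ near $e$; this follows because $k$ fixes $\fra_u$ pointwise (Lemma~\ref{nepo} again), so $x(v)=kx(v)=\exp(\Ad(k)X(v))\,kv$ with $\Ad(k)X(v)\in\frg_u^\perp$ small ($\frg_u^\perp$ is $\Ad(G_u)$-invariant), whence $x(kv)=x(v)$ by IFT uniqueness at the base point $kv$. Alternatively, all of this is packaged in the slice theorem: near $u$ the critical set $O_v\cap\fra_u$ is a single $G_u$-orbit in the slice, which is a single point because the slice representation of $G_u$ at a regular point is trivial. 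With that step made explicit, your proof is complete.
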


\begin{lemma}\label{nepo}
A vector $v\in\frv$ is regular if and only if $G_v\subseteq G_u$
for all $u\in\fra_v$.
\end{lemma}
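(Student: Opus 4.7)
The statement is a biconditional, and my plan is to prove the two implications separately: the forward direction uses Lemma~\ref{frist} and Lemma~\ref{peak} to obtain a local fixation statement and then propagates it by linearity, while the converse uses Corollary~\ref{cone} and Corollary~\ref{pere} to reduce an arbitrary $w_0$ to a representative in $C_v\subseteq\fra_v$.

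For the forward direction, suppose $v$ is regular. Lemma~\ref{frist} supplies a neighborhood $U$ of $v$ in $\fra_v$ on which every $\la_w$ with $w\in U$ attains its maximum on $O_v$ uniquely at $v$, that is, $P_w(O_v)=\{v\}$. Using (\ref{conat}), (\ref{equi}), (\ref{peequ}), this translates into $v\in C_w$ together with the statement that $\la_w$ has a peak on $O_v$ at $v$. Lemma~\ref{peak}, with the roles of its $u$ and $v$ played by $v$ and $w$ respectively, then yields $G_w\subseteq G_v$, which regularity of $v$ promotes to $G_w=G_v$. Hence $G_v$ fixes every point of $U$. Since $G_v$ acts linearly on $\fra_v$ (it preserves $\frt_v=\frg v$, hence $\fra_v=\frt_v^\bot$), pointwise fixation on the open set $U$ extends by linearity to pointwise fixation on all of $\fra_v$, which is precisely $G_v\subseteq G_u$ for every $u\in\fra_v$.

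For the converse, assume $G_v$ fixes $\fra_v$ pointwise, and let $w_0$ satisfy $G_{w_0}\subseteq G_v$; the goal is $G_{w_0}=G_v$. By Corollary~\ref{pere} the orbit $O_{w_0}$ meets $C_v$, and by Corollary~\ref{cone} $C_v\subseteq\fra_v$. Choose $w'=hw_0\in O_{w_0}\cap C_v$. The hypothesis forces $G_v\subseteq G_{w'}=hG_{w_0}h^{-1}$, while conjugation gives $G_{w'}\cong G_{w_0}\subseteq G_v$. The three closed subgroups $G_{w_0}$, $G_v$, $G_{w'}$ therefore share dimension and number of components, so both inclusions in the chain $G_{w_0}\subseteq G_v\subseteq hG_{w_0}h^{-1}$ are equalities, giving $G_{w_0}=G_v$.

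I expect the main obstacle to be the correct bookkeeping with the various peak-set and cone relations when invoking Lemma~\ref{peak}, since the argument hinges on the symmetry between $v\in C_w$ and $w\in C_v$ and between the two peak conditions; the final dimension/component-count collapse in the converse is the routine observation that closed Lie subgroups of a compact group with matching identity component and matching finite quotient are equal.
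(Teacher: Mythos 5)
Your proof is correct, and the forward direction is essentially the paper's own argument: Lemma~\ref{frist} gives the neighborhood $U\subseteq\fra_v$, the relations (\ref{conat}) and (\ref{equi}) put you in the hypotheses of Lemma~\ref{peak} with the roles swapped, regularity upgrades $G_w\subseteq G_v$ to equality, and linearity of the $G_v$-action spreads the fixation from the open set $U$ to all of $\fra_v$. The converse, however, is a genuinely different route. The paper observes that $\fra_v^{\reg}\neq\emptyset$ (Corollary~\ref{pere} applied to a regular orbit, together with the standard fact that regular points exist in $\frv$), takes $u\in\fra_v^{\reg}$, and uses the hypothesis $G_v\subseteq G_u$ plus minimality of $G_u$ to get $G_v=G_u$, hence $v$ regular. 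You instead verify minimality directly: for an arbitrary $w_0$ with $G_{w_0}\subseteq G_v$ you move it into $C_v\subseteq\fra_v$ via Corollaries~\ref{pere} and \ref{cone}, obtaining the chain $G_{w_0}\subseteq G_v\subseteq G_{w'}=hG_{w_0}h^{-1}$, and you close it with the compact-Lie-group rigidity fact that a closed subgroup sandwiched between itself and a conjugate of itself must equal both (equal dimension forces equal identity components, and the equal finite component counts then force equality). Your version buys independence from the existence/density of $\frv^{\reg}$, which the paper only cites as well known; the paper's version is shorter given that fact and stays entirely within the orbit-geometric toolkit already set up, with no appeal to the dimension-and-component-count argument. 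Both arguments are sound.
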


\begin{proof}
According to Lemma~\ref{peak}, for all $u$ in the set $U$ of
Lemma~{\rm\ref{frist}} the reverse inclusion $G_u\subseteq G_v$
holds; if $v$ is regular, then $G_u=G_v$. Since $U$ is open in
$\fra_v$, $G_v\subseteq G_u$ for all $u\in\fra_v$. Conversely, let
$G_v\subseteq G_u$ for all $u\in\fra_v$. It follows from
Corollary~\ref{pere} that $\fra_v^\reg\neq\emptyset$. If
$u\in\fra_v^\reg$, then $G_u=G_v$ since $G_u$ is minimal. Thus,
$v$ is regular.
\end{proof} 


We conclude this section with a proposition which combines the
facts on polar groups that we need in the sequel.

\begin{proposition}\label{pol}
Let $G$ be polar, $\fra$ be a Cartan subspace, and $W$ be the Weyl
group. Then $\fra^\reg$ is open and dense in $\fra$ and
$\fra=\fra_v$ for any $v\in\fra^\reg$. Furthermore,
\begin{itemize}
\item[\rm(\romannumeral1)] if $v\in\fra^\reg$, $g\in G$, and
$gv\in\fra$, then $g\fra=\fra$; \item[\rm(\romannumeral2)] if
$v\in\frv^\reg$, then $G_v$ is a normal subgroup of finite index
in the group
\begin{equation*}
G^v=\{g\in G:\,g\fra_v=\fra_v\}=\{g\in G:\,gv\in\fra_v\}
\end{equation*}
and $G^v/G_v\cong W$; \item[\rm(\romannumeral3)] for all
$a\in\fra$, $Wa=O_a\cap\fra$; \item[\rm(\romannumeral4)] for any
$a\in\fra$, $\pi\wh O_a=\wh{Wa}$.
\end{itemize}
\end{proposition}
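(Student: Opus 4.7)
The plan is to prove the statements in a slightly different order from how they are listed: first the two opening assertions together with (i), then (ii), and finally (iii) and (iv) together, with (iv) established before the general case of (iii).

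I would begin by noting that openness of $\fra^\reg$ in $\fra$ is inherited from openness of $\frv^\reg$ in $\frv$, while density follows from the fact that the principal orbit type is open and dense in $\frv$ and (A) lets any orbit be moved into $\fra$ without changing its orbit type. For $v\in\fra^\reg$, (B) gives $\fra\subseteq\fra_v$; to upgrade this to equality I would apply Sard to the smooth surjection $G\times\fra\to\frv$ (which is surjective by (A)): at some $(e,a)$ with $a\in\fra^\reg$ its differential is of full rank $\dim\frv$, and the image of that differential equals $\frt_a\oplus\fra$ (direct sum by (B)), so $\dim\fra=\dim\fra_a$ and hence $\fra=\fra_a$. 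Claim (i) then follows from $G$-invariance of orbit type: $gv\in\fra^\reg$ whenever $v\in\fra^\reg$, and $g\fra=g\fra_v=\fra_{gv}=\fra$.

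For (ii), I would use (A) to reduce to $v\in\fra$, so that $\fra=\fra_v$. By (i), $gv\in\fra$ if and only if $g\fra=\fra$, which matches the two descriptions of $G^v$. For normality, $gG_vg^{-1}=G_{gv}$ and both $v,gv\in\fra^\reg$; Lemma~\ref{nepo} applied in both directions (using $\fra=\fra_v=\fra_{gv}$) yields $G_v=G_{gv}$. The restriction $g\mapsto g|_\fra$ sends $G^v$ onto $W$, since every $g\in G$ with $g\fra=\fra$ automatically lies in $G^v$; its kernel $\{g\in G^v:g|_\fra=\mathrm{id}\}$ equals $G_v$, because $G_v$ fixes $\fra$ pointwise by Lemma~\ref{nepo} and conversely anything fixing $\fra$ certainly fixes $v\in\fra$. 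Hence $G^v/G_v\cong W$, which is finite.

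The regular case of (iii) follows immediately from (i): if $a\in\fra^\reg$ and $ga\in\fra$, then $g\fra=\fra$ and $ga\in Wa$. I would then deduce (iv) via support functions. For $a\in\fra^\reg$ and $v\in\fra^\reg$, (\ref{peaka}) gives $P_v(O_a)\subseteq\fra_v=\fra$, so $P_v(O_a)\subseteq O_a\cap\fra=Wa$ and $\mu_v(O_a)=\mu_v(Wa)$; by continuity in $v$ and density of $\fra^\reg$, this extends to all $v\in\fra$. Since $\pi$ is self-adjoint with $\pi|_\fra=\mathrm{id}$, one has $\mu_v(\pi X)=\mu_v(X)$ for $v\in\fra$ and compact $X$, so the support functions on $\fra$ of $\pi\wh O_a$ and $\wh{Wa}$ coincide, giving (iv) for regular $a$. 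Continuity in $a$ of both sides then extends (iv) to all of $\fra$. Finally for (iii) in general: for any $b\in O_a\cap\fra$, $b=\pi b\in\pi\wh O_a=\wh{Wa}$, while $|b|=|a|=|wa|$ for every $w\in W$ since $G\subseteq\OO(\frv)$; thus $b$ has maximal norm on $\wh{Wa}$, hence is an extreme point, which forces $b\in Wa$.

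The main obstacle is bridging regular and singular $a$ in (iii). A direct perturbation argument — fixing $b=ga\in O_a\cap\fra$ and taking $a_n\to a$ in $\fra^\reg$ — encounters $ga_n\notin\fra$ in general and would require a delicate continuity statement for $O_{a_n}\cap\fra$ near a non-principal limit. The detour through (iv) replaces this topological issue by the elementary convex-geometric observation that a point of the convex hull of a finite subset of a sphere that itself has maximal norm must be one of those points.
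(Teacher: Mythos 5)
Your proposal is correct, but it reaches (iii) and (iv) by a genuinely different route from the paper. For the opening claims and (i)--(ii) you essentially follow the paper: where the paper gets $\fra=\fra_v$ from the dimension count ($\codim\frt_v>\dim\fra$ would force $\dim(\frv/G)>\dim\fra$), you get the same count from Sard applied to $G\times\fra\to\frv$; note that this yields $\fra=\fra_a$ only at the one point $a$ produced by Sard, so you should add the one-line remark that all regular orbits have the same (maximal) dimension in order to transfer the equality to every $v\in\fra^\reg$, and you assert finiteness of $W$ (needed for ``finite index'') without the paper's short justification that by (B) the set $O_v\cap\fra$ is discrete and compact and $W$ embeds into it via $w\mapsto wv$. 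The genuine divergence is in (iii)--(iv): the paper proves (iii) for all $a\in\fra$ by a terse appeal to density of $\fra^\reg$ and then deduces (iv) from (iii) by a separation argument based on (\ref{peaka}), whereas you prove only the regular case of (iii), establish (iv) first --- for regular $a$ by matching the support functions $\mu_v(O_a)=\mu_v(Wa)$ for $v\in\fra$ via (\ref{peaka}) and density, then for all $a$ by Hausdorff continuity of $a\mapsto\pi\wh O_a$ and $a\mapsto\wh{Wa}$ --- and finally recover the singular case of (iii) from (iv) by the observation that a point of $\wh{Wa}$ of norm $|a|$ must be an extreme point and hence lie in $Wa$. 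This is a legitimate and arguably cleaner treatment of the singular case: it sidesteps the lower semicontinuity of $a\mapsto O_a\cap\fra$ on which the paper's density remark silently relies, at the cost of the support-function and limit machinery; the paper's order is shorter but leaves exactly that bridge implicit. One caveat: your justification of density of $\fra^\reg$ in $\fra$ (moving orbits into $\fra$ by (A)) really only gives nonemptiness; this matches the paper's own brevity (``follows from (A) and (B)''), but be aware that density is used essentially in both of your continuity steps.
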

\begin{proof} It follows from (A) that $\fra^\reg\neq\emptyset$.
Let $v\in\fra^\reg$. By (B), $\frt_v\perp\fra$. Hence
$\fra_v\supseteq\fra$. If $\codim \frt_v>\dim\fra$, then
$\dim(\frv/G)>\dim\fra$, contradictory to (A) and (B). Thus,
$\fra_v=\fra$. It is well known that $\frv^\reg$ is open and dense
in $\frv$. Hence, $\fra^\reg$ is open in $\fra$; it follows from
(A) and (B) that $\fra^\reg$ is dense in $\fra$.

If $v\in\fra^\reg$ and $gv\in\fra$, then $gv\in\fra^\reg$. Hence
$\fra=\fra_{gv}$. Since $g\fra_v=\fra_{gv}$ and $\fra_v=\fra$,
this implies (\romannumeral1).

In (\romannumeral2), we may assume $v\in\fra^\reg$; then
$\fra_v=\fra$. By Lemma~\ref{nepo}, $G_u=G_v$ for all
$u\in\fra^\reg$. Therefore, $G_{gv}=G_v$ if $g\in G^v$, $G_v$ is
normal in $G^v$, and we have $G^v/G_v\cong G^v|_\fra=W$. By
(\romannumeral1), $O_v\cap\fra=G^vv$; thus $Wv=O_v\cap\fra$. It
follows from (B) that $O_v\cap\fra$ and $W$ are finite.

For $a\in\fra^\reg$, (\romannumeral3) was proved above; for all
$a\in\fra$, (\romannumeral3) is true since $\fra^\reg$ is dense in
$\fra$.

In (\romannumeral4), the inclusion $\pi\wh O_a\supseteq\wh{Wa}$ is
obvious. By (\romannumeral3), we have to prove that $\pi\wh
O_a\subseteq\wh{O_a\cap\fra}$. Otherwise, there exist $u\in O_a$
and $b\in\fra^\reg$ such that
\begin{eqnarray*}
\la_b(u)>\max\{\la_b(x):\,x\in O_a\cap\fra\}.
\end{eqnarray*}
Then $P_b(O_u)\cap\fra=\emptyset$ but this contradicts to
(\ref{peaka}) since $\fra=\fra_b$.
\end{proof}

\section{\SP{} for finite linear groups}

If $G$ is finite, then $\fra_v=\frv$ for all $v\in\frv$ and
$\frv^\reg$ consists of $v\in\frv$ such that $G_v=\{e\}$. The
following lemma is a specification of Lemma~\ref{sepa} to this
case.
\begin{lemma}\label{vodir}
For any $v\in\frv$, the family of cones $\{C_{gv}\}_{g\in G}$
defines the Dirichlet---Voronoi partition of $\frv$ for the orbit
$O_v$:
\begin{eqnarray}\label{dirvo}
C_{hv}=\big\{u\in\frv:\,|u-hv|=\min_{g\in G}|u-ghv|\big\}.
\end{eqnarray}
\end{lemma}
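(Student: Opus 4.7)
The plan is to observe that the lemma is essentially a direct translation between the linear condition defining $C_{hv}$ and the squared-distance condition defining a Dirichlet--Voronoi cell, made possible by the fact that $G$ acts by orthogonal transformations.

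First I would reduce the two sides of (\ref{dirvo}) to a common form by expanding squared distances. Since $G\subseteq\OO(\frv)$, for each $g,h\in G$ we have $|ghv|=|hv|=|v|$, so
\begin{equation*}
|u-ghv|^2-|u-hv|^2=2\scal{u}{hv-ghv}.
\end{equation*}
Therefore the right-hand side of (\ref{dirvo}), namely the set of $u$ with $|u-hv|\le|u-ghv|$ for all $g\in G$, coincides with $\{u\in\frv:\scal{u}{hv-ghv}\ge0\text{ for all }g\in G\}$.

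Next I would identify this set with $C_{hv}$. The defining condition (\ref{cvdef}) for $u\in C_{hv}$ reads $\scal{u}{hv-g(hv)}\ge0$ for all $g\in G$. This is literally the same inequality, since $g\mapsto g$ and $g\mapsto gh^{-1}$ are both bijections of $G$, so $\{ghv:g\in G\}=\{g(hv):g\in G\}=O_v$. Thus the two descriptions coincide, giving (\ref{dirvo}).

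There is no real obstacle: the finite-group hypothesis is used only insofar as the orbit $O_v$ is finite (so that the minimum over $g$ is attained and the Dirichlet--Voronoi partition is well defined in the usual sense), while the key identity itself is just orthogonality of the $G$-action combined with the definition of $C_v$. The statement about a ``partition'' then follows because every orbit meets each $C_{hv}$ (Corollary~\ref{pere}) and the interiors are pairwise disjoint by Lemma~\ref{sepa}, so I would add one sentence noting these two facts to justify the word ``partition''.
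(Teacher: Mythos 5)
Your proof is correct, and it is a more direct route than the paper's. Both arguments ultimately rest on the same elementary fact: since $|hv|=|ghv|$, the half-space $\{u:\scal{u}{hv-ghv}\geq0\}$ coincides with $\{u:|u-hv|\leq|u-ghv|\}$, i.e.\ the bounding hyperplane is the equidistant hyperplane of $hv$ and $ghv$, which passes through the origin. You apply this wholesale to the defining system of inequalities (\ref{cvdef}), so the identity (\ref{dirvo}) drops out of a two-line expansion of squared norms. The paper instead describes $C_v$ as the dual of $C_v^*=\clos\left(\bbR^+(v-\wh O_v)\right)$, identifies the facets of $C_v$ with the equidistant hyperplanes attached to the extreme rays $\bbR^+(v-gv)$, and then uses $O_v\cap C_v=\{v\}$ from (\ref{ovcv}) to conclude that $u\in C_v$ exactly when $v$ is the nearest point of $O_u\cap O_v$-type considerations; this buys the polytope picture (tangent cone, extreme rays, walls of $C_v$) that is reused in the proof of Theorem~\ref{finv}, but your computation needs none of it. Your closing sentence on the word ``partition'' (covering via Corollary~\ref{pere} together with equivariance $C_{ghv}=gC_{hv}$, disjointness of interiors via Lemma~\ref{sepa}) is a reasonable addition, and the covering is even more immediate from your own description, since the minimum over the finite orbit is attained. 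One cosmetic remark: the reindexing comment about $g\mapsto gh^{-1}$ is unnecessary, because $ghv=g(hv)$ literally, so (\ref{cvdef}) applied to the point $hv$ is verbatim the inequality you derived.
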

\begin{proof}
Let $v\in\frv$. By definition, $-C^*_v$ is the tangent cone to the
convex polytope $\wh O_v$ at the vertex $v$. The dual cone $C_v$
is uniquely determined by the following properties: it contains
$v$ and is bounded by hyperplanes which are orthogonal to extreme
rays of $C_v^*$. Let $\bbR^+(v-gv)$, $g\in G$, be such a ray. Then
the equidistant hyperplane $H$ for $v$ and $gv$ defines a face of
$C_v$ (note that  $|v|=|gv|$ implies $0\in H$). By (\ref{ovcv}),
$O_v\cap C_v=\{v\}$. Hence,  $u\in C_v$ if and only if $v$ is the
nearest point for $u$ in $O_v$. This proves (\ref{dirvo}) for
$h=e$ that is evidently sufficient.
\end{proof}
\begin{corollary}\label{simtr}
If $v\in\frv^\reg$, then the action of $G$ on the family of cones
$C_{gv}$, $g\in\frg$, is simply transitive.\qed
\end{corollary}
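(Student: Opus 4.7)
The plan is to verify three things: (i) the assignment $g\cdot C_{hv}=C_{ghv}$ is a well-defined $G$-action on the family $\{C_{gv}\}_{g\in G}$; (ii) this action is transitive; and (iii) the stabilizer of each cone is trivial.

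For (i), I would first establish the general $G$-equivariance $gC_w=C_{gw}$ for every $w\in\frv$ and $g\in G$. This is a direct change-of-variable in the definition (\ref{cvdef}): $u\in gC_w$ iff $\scal{g^{-1}u}{w-hw}\geq 0$ for all $h\in G$, iff $\scal{u}{gw-(ghg^{-1})(gw)}\geq 0$ for all $h\in G$, and since $h\mapsto ghg^{-1}$ is a bijection of $G$, this is equivalent to $u\in C_{gw}$. Taking $w=hv$ gives the stated action on the family, and transitivity (ii) is then immediate because every member $C_{gv}$ equals $gC_v$.

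The crux is (iii). Suppose $g\in G$ satisfies $gC_v=C_v$, equivalently $C_{gv}=C_v$. Applying (\ref{ovcv}) to the vector $gv$ gives $O_{gv}\cap C_{gv}=\{gv\}$; since $O_{gv}=O_v$ and $C_{gv}=C_v$, this forces $O_v\cap C_v=\{gv\}$. But (\ref{ovcv}) applied to $v$ itself already says $O_v\cap C_v=\{v\}$, so $gv=v$. Because $v\in\frv^\reg$ and $G$ is finite, $v$ regular means $G_v=\{e\}$, hence $g=e$, which establishes simple transitivity.

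I do not anticipate a genuine obstacle here; the argument is essentially a bookkeeping exercise around the two previously established facts $gC_w=C_{gw}$ and $O_v\cap C_v=\{v\}$. The only point requiring a little care is making sure the $G$-equivariance of $w\mapsto C_w$ is derived from (\ref{cvdef}) rather than smuggled in through the Dirichlet--Voronoi description of Lemma~\ref{vodir}, so that the argument stays valid without appealing to the metric characterization (although the Voronoi picture of Lemma~\ref{vodir} also makes the conclusion geometrically transparent, since the Voronoi cells of $O_v$ are permuted by $G$ in exactly the same way as their defining points).
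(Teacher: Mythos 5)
Your proof is correct and matches the argument the paper intends (it states the corollary with only a \qed, as an immediate consequence of the preceding discussion): transitivity comes from the equivariance $gC_w=C_{gw}$, and freeness comes from combining $O_v\cap C_v=\{v\}$ (\ref{ovcv}) with $G_v=\{e\}$ for regular $v$ when $G$ is finite. Whether one phrases the key fact via (\ref{ovcv}) directly, as you do, or via the Dirichlet--Voronoi description of Lemma~\ref{vodir} is only a cosmetic difference, since (\ref{ovcv}) is exactly what identifies $C_{gv}$ as the cell of $gv$.
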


\begin{theorem}\label{finv}
Let $G$ be finite. Then \SP{} is equivalent to each of the
following properties:
\begin{itemize}
\item[\rm(\romannumeral1)]  for any $v\in\frv^\reg$, the
functional $\la_v$ has a peak on each $G$-orbit in $\frv$;
\item[\rm(\romannumeral2)] $G$ is a Coxeter group;
\item[\rm(\romannumeral3)] if $v\in\frv^\reg$, then $C_u=C_v$ for
any $u$ from some neighborhood of $v$.
\end{itemize}
\end{theorem}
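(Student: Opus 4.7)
The plan is a hub-and-spoke argument around (ii): I prove (ii) implies each of (i), (iii), and SP using classical Coxeter theory, then close the equivalences via the chain SP~$\Rightarrow$~(iii)~$\Rightarrow$~(ii) and the direct implication (i)~$\Rightarrow$~(ii). The unifying observation is that by Lemma~\ref{vodir}, the normal fan of $\wh O_v$ for regular $v$ is exactly the Dirichlet--Voronoi partition $\{gC_v\}_{g\in G}$, so Minkowski summation corresponds to common refinement of these fans. When $G$ is a Coxeter group, $C_v$ for $v$ in an open Weyl chamber coincides with the closed chamber containing $v$, a strict fundamental domain; this immediately yields (i), and (iii) follows because all regular points in the interior of a fixed chamber share the same DV cell. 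For SP, both $\wh O_u$ and $\wh O_v$ have the Coxeter complex as normal fan, so does their Minkowski sum, and the $|G|$ vertices arising one per chamber form a single $G$-orbit by the simply transitive action on chambers.

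For SP~$\Rightarrow$~(iii), fix regular $u,v$ and apply SP to get $\wh O_u + \wh O_v = \wh O_w$ for some $w$. The number of vertices of the Minkowski sum equals the number of maximal cones of the common refinement of $\{gC_u\}$ and $\{gC_v\}$, which is at least $|G|$ with equality exactly when the two fans coincide; it also equals $|O_w|\leq|G|$, so both equal $|G|$ and $\{gC_u\}=\{gC_v\}$. Hence $C_u$ takes only finitely many values as $u$ varies over $\frv^\reg$; since each defining inequality $\scal{x}{u-hu}\geq 0$ depends continuously on $u$, the cone $C_u$ varies continuously in $u$, and continuity into a finite set is local constancy, giving (iii). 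For (iii)~$\Rightarrow$~(ii), fix regular $v$ and a facet-defining $h\in G$ of $C_v$. Local constancy forces the hyperplane $(v-hv)^\bot$ to support $C_u$ along a facet for all $u$ near $v$, so $u-hu$ must remain a scalar multiple of $v-hv$. Writing $u=v+y$ yields $(v-hv)+(I-h)y\in\bbR(v-hv)$ for every small $y$, forcing $\mathrm{range}(I-h)$ to be one-dimensional and $h$ to be a reflection across its fixed hyperplane. Traversing the connected DV tiling by facet-crossings shows these reflections act transitively on $\{gC_v\}_g$, and by Corollary~\ref{simtr} they must generate $G$.

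The remaining implication (i)~$\Rightarrow$~(ii) is independent: if $F$ is a facet of $C_v$ in hyperplane $H$ and $g_0C_v$ the adjacent cell, then $F\subseteq C_v\cap g_0C_v$ and for $x\in F$ both $x$ and $g_0^{-1}x$ lie in $C_v\cap O_x$; by (i) this set is a singleton, so $g_0$ fixes $H$ pointwise and is the reflection $r_H$, and these reflections generate $G$ by the same tiling argument. The main obstacle I anticipate is the rank-one deduction in (iii)~$\Rightarrow$~(ii): it requires that a facet-defining $h$ at $v$ remains facet-defining for nearby $u$, a combinatorial stability of the supporting-hyperplane structure under small perturbation, which is exactly why the hypothesis (iii) is stated locally rather than globally.
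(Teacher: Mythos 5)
Your implication graph (hub at (\romannumeral2), plus \SP$\imply$(\romannumeral3)$\imply$(\romannumeral2) and (\romannumeral1)$\imply$(\romannumeral2)) is logically complete and genuinely different from the paper's cycle \SP$\imply$(\romannumeral1)$\imply$(\romannumeral2)$\imply$\SP{} together with (\romannumeral2)$\Leftrightarrow$(\romannumeral3). Your (\romannumeral1)$\imply$(\romannumeral2) is correct and in fact slicker than the paper's: by (\ref{pvou}), property (\romannumeral1) applied to the regular vector $v$ makes $C_v\cap O_x$ a singleton for every $x$, so the element carrying $C_v$ to an adjacent Dirichlet--Voronoi cell fixes the common facet pointwise and is a reflection; the paper instead analyzes the stabilizer of an interior point of the wall. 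Your vertex/normal-fan count for \SP$\imply$(\romannumeral3) is a valid alternative to the paper's \SP$\imply$(\romannumeral1) (which uses extreme points of peak sets and Corollary~\ref{empin}); only the last step, ``$C_u$ varies continuously, hence is locally constant,'' is unjustified as stated --- it is cleaner to note that $u\in\Int(C_u)$ for every $u$ (Lemma~\ref{frist} with $\fra_u=\frv$), that $C_u$ belongs to the finite family $\{gC_v\}_{g\in G}$ whose members have pairwise disjoint interiors, and that $u\in\Int(C_v)$ for $u$ near $v$, which forces $C_u=C_v$.

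There remain two genuine gaps. First, in (\romannumeral3)$\imply$(\romannumeral2) your rank-one computation needs a \emph{single} $h\in G$ with $u-hu$ a nonzero multiple of the fixed facet normal for \emph{all} $u$ near $v$; a priori the group element realizing that facet of $C_u$ may change with $u$, and you acknowledge this obstacle without closing it, so the step ``$(v-hv)+(\one-h)y\in\bbR(v-hv)$ for every small $y$'' is not yet available. The paper closes it by shrinking $U$ so that $gu\notin hU$ for $g\neq h$; alternatively, for each $h$ the set $\{u:(\one-h)u\in\bbR n\}$ ($n$ a normal of the facet hyperplane) is a linear subspace, finitely many such sets with $h\neq e$ cover the open set $U$, hence one of them is all of $\frv$, and for that $h\neq e$ the operator $\one-h$ has rank one. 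Second, your (\romannumeral2)$\imply$\SP{} is argued only for regular orbits: for non-regular $u$ the normal fan of $\wh O_u$ is coarser than the Coxeter fan and the sum has fewer than $|G|$ vertices, while \SP{} quantifies over \emph{all} pairs of orbits. The repair is easy in your own framework: take $u,v$ in a fixed closed chamber $C$ and use that $C$ is a strict fundamental domain, so for $x$ in an open chamber $g\Int(C)$ one has $P_x(O_u)=\{gu\}$ and $P_x(\wh O_u+\wh O_v)=\{g(u+v)\}$, whence the vertex set of the sum is exactly $O_{u+v}$; the paper instead derives this from the identity (\ref{weyhu}).
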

\rm
\begin{proof}
\SP$\imply$(\romannumeral1). Let $v\in\frv^\reg$, $u,w\in\frv$,
and $\wh O_v+\wh O_u=\wh O_w$. Set $P=P_v(O_u)$. Due to
Corollary~\ref{pere}, we may assume $u,w\in C_v$; then $u\in P$ by
(\ref{pvou}). Clearly, $P_v(\wh O_v)=P_v(O_v)=\{v\}$ and $P_v(\wh
O_u)=\wh P$. By (\ref{sumpe}), we have
\begin{eqnarray*}
P_v(\wh O_w)=v+\wh P.
\end{eqnarray*}
Furthermore, $v+\wh P\subset\Int(C_v)$ since $v\in\Int(C_v)$, $\wh
P\subseteq C_v$, and $\wh P$ is compact. Let $E$ be the set of
extreme points of $v+\wh P$. Clearly, $E\subseteq
v+P\subset\Int(C_v)$. On the other hand, $E\subseteq O_w$ since
extreme points of $P_v(\wh O_w)$ are extreme points of $\wh O_w$.
By Corollary~\ref{empin}, $O_w\cap C_v=G_vw$. The stable subgroup
$G_v$ is trivial since $v\in\frv^\reg$. Therefore, $v+P=\{w\}$ and
$P=\{u\}$.

(\romannumeral1)$\imply$(\romannumeral2). Let $v\in\frv^\reg$,
$u\in O_v$, $u\neq v$. Due to Lemma~\ref{vodir}, we may assume
that $C_v$ and $C_u$ have a common wall
\begin{eqnarray*}
\cW=C_v\cap C_u
\end{eqnarray*}
which is contained in an equidistant hyperplane $H$ such that
$\Int_H\cW\neq\emptyset$. Let $w\in \Int_H(\cW)$. Then $u,v\in
P_w(O_v)$. It follows from (\romannumeral1) that $w$ is not
regular. Hence, $G_w\neq\{e\}$. Let $g\in G_w\setminus\{e\}$. Then
$g C_v\neq C_v$ by Corollary~\ref{simtr}. Obviously, $w$ has a
$G_w$-invariant neighborhood $U$ such that $U\subseteq C_v\cup
C_u$; this implies $g C_v=C_u$. Similar arguments shows that $g
C_u=C_v$. Therefore, $g^2 C_v=C_v$. By Corollary~\ref{simtr},
$g^2=e$ and the condition $g C_v=C_u$ uniquely determines $g$. The
latter means that $g$ is independent of the choice of
$w\in\Int_H(\cW)$. Hence $g$ is identical on some open subset of
$H$. Consequently,  $g$ is a nontrivial involution that fixes
points of $H$. Thus, $g$ is the reflection in $H$. Every pair of
cones in the family $\{C_{gv}\}_{g\in G}$ can be joined by a chain
of these cones in such a way that consecutive ones have a common
wall; since $G$ acts freely on $\{C_{gv}\}_{g\in G}$ by
Corollary~\ref{simtr}, $G$ is generated by reflections.

(\romannumeral2)$\imply$(\romannumeral3),
(\romannumeral2)$\imply$\SP. Let $G$ be a Coxeter group and $C$ be
a Weyl chamber. Then $C$ is a simplicial cone; let
$\varpi_1,\dots,\varpi_n$ be a base in $\frv$ such that
$C=\sum_{k=1}^n\bbR^+\varpi_k$ and let $\al_1,\dots,\al_n$ be the
dual base, which generates the dual cone $C^*$. The group
$G_{\varpi_k}$ is generated by reflections in those walls of $C$
that contain $\varpi_k$ (they correspond to $\al_j$ with $j\neq
k$). Hence, $\wh{G_{\varpi_k}v}\subset\wh O_v$; moreover,
$\wh{G_{\varpi_k}v}$ is a face of $\wh O_v$ which is orthogonal to
$\varpi_k$. This proves inclusions $\wh O_v\cap
C\supseteq(v-C^*)\cap C$ and $C_v\supseteq C$. On the other hand,
the set $\bigcap_{g\in G}g\left(v-C^*\right)$ is convex and
contains $v$. Therefore,
\begin{eqnarray}\label{weyhu}
\wh O_v=\bigcap_{g\in G}g\left(v-C^*\right)=\bigcup_{g\in
G}g\left((v-C^*)\cap C\right)
\end{eqnarray}
and $C_v=C$. This proves (\romannumeral3). Further, \SP{}
evidently holds for the families $\{(v-C^*)\cap C\}_{v\in C}$ and
$\{g(v-C^*)\}_{v\in C}$ for any $g\in G$, hence for $\wh O_v$.

(\romannumeral3)$\imply$(\romannumeral2). Let $C_v=C_u=C$ for $u$
in some open set $U$. Then $C^*_v=C^*_u=C^*$, there $C^*$ is the
dual cone to $C$. Since $v\in\frv^\reg$, we have $G_v=\{e\}$.
Hence, we may assume that $gu\notin hU$ if $g\neq h$, taking a
smaller $U$ if necessary. This implies that each extreme ray of
$C^*$ is of the form $\bbR^+(u-gu)$, where $g\in G$ does not
depend on $u\in U$. Therefore, the linear operator $\one-g$ (where
$\one$ is the identical transformation) maps $U$ into some one
dimensional subspace and is nontrivial. Hence it has rank 1. Since
$g$ is orthogonal, it is a reflection. It remains to note that the
action of $G$ on $O_v$ is simply transitive and that every two
vertices of a convex polytope can be joined by a chain of one
dimensional edges.
\end{proof}

\section{Proof of the main result}

For a polar $G$, $\fra$ is a Cartan subspace, $\pi$ is the
orthogonal projection onto $\fra$, and $W$ is the Weyl group;
$G^e$ denote the identity component of $G$.

\begin{lemma}\label{propo}
Let $G$ be polar. Then
\begin{itemize}
\item[\rm(1)] for any $G$-invariant  convex set $Q\subseteq\frv$,
$\pi Q=Q\cap\fra$ and $Q=G(Q\cap\fra)$; \item[\rm(2)] for every
$W$-invariant convex set $A\subseteq\fra$,  $GA$ is convex and
$\pi GA=A$.
\end{itemize}
\end{lemma}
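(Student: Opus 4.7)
The plan is to prove (1) first, using the key identity $\pi\wh O_v=\wh{Wa}$ from Proposition~\ref{pol}(\romannumeral4), and then to deduce (2) by applying (1) to the convex hull $\wh{GA}$.

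For part (1), the inclusion $Q\cap\fra\subseteq\pi Q$ is immediate since $\pi$ restricts to the identity on $\fra$. For the reverse inclusion, the point is that the orthogonal projection of any vector $v\in\frv$ onto $\fra$ lies in its own convex orbit hull: given $v\in Q$, condition~(A) supplies $g\in G$ with $a=gv\in\fra$, and since $Q$ is $G$-invariant and convex we get $\wh O_v=\wh O_a\subseteq Q$; by Proposition~\ref{pol}(\romannumeral4),
\begin{equation*}
\pi v\in\pi\wh O_v=\wh{Wa}\subseteq\wh O_a\subseteq Q,
\end{equation*}
so $\pi v\in Q\cap\fra$. The identity $Q=G(Q\cap\fra)$ is then an easy consequence of (A): the inclusion $\supseteq$ uses only $G$-invariance, and for $\subseteq$ any $v\in Q$ has some $a=gv\in\fra\cap Q$, whence $v=g^{-1}a\in G(Q\cap\fra)$.

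For part (2), set $Q=\wh{GA}$; this is $G$-invariant and convex, so (1) gives $\pi Q=Q\cap\fra$ and $Q=G(Q\cap\fra)$. The task reduces to proving $Q\cap\fra=A$. The inclusion $A\subseteq Q\cap\fra$ is trivial. For the reverse, let $u\in Q\cap\fra$ and write $u$ as a convex combination $u=\sum_i t_i g_i a_i$ with $a_i\in A$ and $g_i\in G$. Applying $\pi$ and using $\pi u=u$, we obtain $u=\sum_i t_i\pi(g_ia_i)$. Each $\pi(g_ia_i)$ lies in $\pi\wh O_{a_i}=\wh{Wa_i}$ by Proposition~\ref{pol}(\romannumeral4); since $A$ is $W$-invariant and convex, $\wh{Wa_i}\subseteq A$. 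Thus $u$ is a convex combination of points of $A$ and hence $u\in A$. Then $GA\subseteq Q=G(Q\cap\fra)=GA$, showing that $GA=Q$ is convex, and $\pi GA=Q\cap\fra=A$.

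The main conceptual step is the argument in (1) that $\pi v\in Q$ for every $v\in Q$: once this observation is available, everything else is routine bookkeeping with $G$-invariance, $W$-invariance and convexity. The only subtle point is to remember that Proposition~\ref{pol}(\romannumeral4) gives not just $\pi\wh O_a\subseteq\wh O_a$ but the exact description as $\wh{Wa}$, which is what lets the projection of an arbitrary convex combination in (2) be recognized as a convex combination of points already in $A$.
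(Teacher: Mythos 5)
Your proof is correct. Part (1) is essentially the paper's argument in pointwise form: both reduce to Proposition~\ref{pol}(\romannumeral4) (the paper combines it with (\romannumeral3) to get the exact identity $\pi\wh O_v=\wh O_v\cap\fra$ and then writes $Q=\bigcup_{v\in Q}\wh O_v$, while you check directly that $\pi v\in\wh{Wa}\subseteq Q$ for each $v\in Q$). Part (2), however, goes by a genuinely different route. The paper proves convexity of $GA$ structurally: from $\pi GA=A$ it gets $GA\subseteq\pi^{-1}(A)$, and then, using $G$-invariance and condition (A), identifies $GA=\bigcap_{g\in G}\pi_g^{-1}(gA)$ as an intersection of convex sets; this representation is independent of part (1) and is a statement of some interest in itself. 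You instead pass to the convex hull $Q=\wh{GA}$, apply part (1) to it, and show $Q\cap\fra=A$ by projecting an arbitrary convex combination $\sum_i t_ig_ia_i$ and recognizing each $\pi(g_ia_i)\in\wh{Wa_i}\subseteq A$; then $Q=G(Q\cap\fra)=GA$ forces the hull to add nothing, so $GA$ is convex and $\pi GA=A$. Your version is slightly more economical in that it reuses (1) and avoids verifying the reverse inclusion in the intersection formula (which in the paper again rests on (A)), at the cost of not producing that intersection description; both arguments ultimately lean on the same input, Proposition~\ref{pol}(\romannumeral4).
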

\begin{proof}
Clearly, $\pi\wh O_v\supseteq\wh O_v\cap\fra\supseteq
\wh{O_v\cap\fra}$. Together with Proposition~\ref{pol},
(\romannumeral3) and (\romannumeral4), this implies $\pi\wh
O_v=\wh O_v\cap\fra$ for all $v\in\frv$.
Since $Q=\bigcup_{v\in Q}\wh O_v$, this proves the first equality
in (1); the second follows from (A).

If $a\in A$, then $\wh{Wa}\subseteq A$ and we get $\pi GA=A$ since
$\pi\wh O_a=\wh{Wa}$ for all $a\in\fra$ by Proposition~\ref{pol},
(\romannumeral4). Thus, $GA\subseteq \pi^{-1}(A)$. Clearly,
$\fra\cap\pi^{-1}(A)=A$ and the set $\pi^{-1}(A)$ is convex. For
any $g\in G$, the same is true for the set $gA$, the space
$g\fra$, and the orthogonal projection $\pi_g$ onto it. Hence
$GA=\bigcap_{g\in G}\pi_g^{-1}(gA)$. This proves that $GA$ is
convex. Thus, (2) is true.
\end{proof}
\begin{proof}[Proof of Proposition~\ref{reiso}]
Clearly, $\pi$ induces a homomorphism
$\frC(\frv,G)\to\frC(\fra,W)$. Since $G$ is polar, $\pi
V=V\cap\fra$ for all $V\in\frC(\frv,G)$ by Lemma~\ref{propo}, (1).
Hence the mapping $\al:\,V\to V\cap\fra$, $V\in\frC(\frv,G)$,
which coincides with $\pi$ on $\frC(\frv,G)$, is a homomorphism.
It follows from Lemma~\ref{propo}, (1), that $\al$ is one-to-one.
By Lemma~\ref{propo}, (2), $GA\in\frC(\frv,G)$ for any
$A\in\frC(\fra,W)$ and $\pi GA=A$. Hence $\al$ is surjective.
\end{proof}
\begin{proof}[Proof of Theorem~\ref{main}]
Let $G$ be polar and $W$ be a Coxeter group. Then the family
$\{\wh{Wa}\}_{a\in\fra}$ is a semigroup due to Theorem~\ref{finv}.
By Proposition~\ref{pol}, ({\romannumeral4}), we have $\wh
O_a=\wh{Wa}$  for all $a\in\fra$. It follows from
Proposition~\ref{reiso} that $\{\wh O_v\}_{v\in\frv}$ is a
semigroup, i.e., $G$ satisfies \SP{}.

Conversely, let \SP{} hold for $G$. Then, for each pair of
$G$-orbits, there exist vectors $u,v$ in them such that $\wh
O_u+\wh O_v=\wh O_{u+v}$. Clearly,
\begin{eqnarray}\label{sumor}
\wh O_u+\wh O_v=\wh O_{u+v}\quad\Longrightarrow\quad
\frt_u+\frt_v\subseteq \frt_{u+v}.
\end{eqnarray}
Suppose $u$ regular. We claim that
\begin{eqnarray*}
\fra=\fra_u
\end{eqnarray*}
is a Cartan subspace. The condition (A) is obvious. By
(\ref{sumor}), $\dim\frt_u\leq\dim\frt_{u+v}$; since
$u\in\fra^\reg$, we have $\dim\frt_u=\dim\frt_{u+v}$. Therefore,
$\frt_u=\frt_{u+v}=\frt_u+\frt_v$. Moreover, $\frt_v=\frt_u$ if
$v$ is regular; then
\begin{eqnarray}\label{vinfra}
\frt_v=\fra^\bot.
\end{eqnarray}
Thus, it is sufficient to prove that there exists a neighborhood
$U$ of $u$ in $\fra$ such that
\begin{eqnarray}\label{peaks}
v\in U\quad\Longrightarrow\quad \wh O_u+\wh O_v=\wh O_{u+v}
\end{eqnarray}
to verify (B). Indeed, we may assume $U\subseteq\fra^\reg$. Then
(\ref{peaks}) implies (\ref{vinfra}) for all $v\in U$;
consequently, $\frt_v\perp\fra$ for all $v\in\fra$.

Let $U$ be such that
\begin{eqnarray}\label{slice}
O_v\cap U=\{v\}
\end{eqnarray}
for each $v\in U$. Since $u\in\frv^\reg$ is a peak point for
$\la_u$ on $O_u$, the function $\la_u$ must have a peak on $O_v$
for $v$ near $u$ by Lemma~\ref{krist}. The peak point $v'\in O_v$
depends on $v$ continuously in some neighborhood of $u$ since $u$
is regular and $d^2\la_u$ is nondegenerate on $\frt_u$.
Furthermore, $v'\in\fra=\fra_u$ since $v'$ is a critical point for
$\la_u$ on $O_v$. Thus, for sufficiently small
 $U$, (\ref{slice}) implies that $v'=v$ if $v\in U$. In other
words, $v$ in (\ref{slice}) is the peak point of $\la_u$ on $O_v$
for all $v\in U$. Further, we have $\wh O_u+\wh O_v=\wh O_w$  for
some $w\in\frv$. Clearly, $\la_u$ have a peak at $u$ on $\wh O_u$
as well as on $O_u$, and the same is true for $\la_u$, $v$, $\wh
O_v$, and $O_v$. Therefore, $\la_u$ has a peak on $\wh O_u+\wh
O_v$ at $u+v$. This implies that $u+v$ is an extreme point for
$\la_u$ on $\wh O_w$ but the set of extreme points of $\wh O_w$
coincides with $O_w$ since $O_w$ is homogeneous. Thus, $u+v\in
O_w$ and we get (\ref{peaks}). This proves the claim.

Since $G$ is polar, we may apply Proposition~\ref{reiso}.
According to it, \SP{} for $G$ implies that the family $\{\wh
O_a\cap\fra\}_{a\in\fra}$ is a semigroup. The convex hull
$\wh{Wa}$ is the least convex $W$-invariant set which contains
$a$, and the same is true for $\wh O_a$ and $G$. The mapping $Q\to
Q\cap\fra$ keeps inclusions and is a bijection by
Proposition~\ref{reiso}. Hence we have $\wh O_a\cap\fra=\wh{Wa}$
for all $a\in\fra$. Therefore, $\{\wh{ Wa}\}_{a\in\fra}$ is a
semigroup; by Theorem~\ref{finv}, $W$ is a Coxeter group.
\end{proof}

\vbox{\vskip1cm
\noindent V.M. Gichev\\
gichev@ofim.oscsbras.ru\\
Omsk Branch of Sobolev Institute of Mathematics\\
Pevtsova, 13, 644099\\
Omsk, Russia}

\end{document}